%%%%%%%%%%%%%%%%%%%%%%%%%%%%%%%%%%%%%%%%%%%%%%%%%%%%%%%%%%%%%%%%
%                                                              %
%               Sharp threshold for collapse                   %
%                of random triangular group                    %
%                                                              %
%                            by                                %
%                                                              %
%              Sylwia Antoniuk, Ehud Friedgut                  %
%                    and Tomasz {\L}uczak                      %
%                                                              %
%                                                              %
%                     March 13, 2014                           %
%               revised: December 27, 2015                     % 
%                                                              %
%%%%%%%%%%%%%%%%%%%%%%%%%%%%%%%%%%%%%%%%%%%%%%%%%%%%%%%%%%%%%%%%

\documentclass[a4paper,12pt]{amsart}

%\usepackage[a4paper,hmargin={3.5cm,3.5cm},vmargin={3.5cm,3.5cm}]{geometry}

%PACKAGES

\usepackage{amsthm,amssymb,amsmath,amscd}
\usepackage[mathscr]{euscript}
%\usepackage{verbatim}
%\usepackage{setspace}
%\usepackage{float}
%\usepackage{verbatim}
%\singlespacing
%\onehalfspacing
\usepackage{textcomp}

%MACROS

%THEOREMS

\theoremstyle{definition}
\newtheorem{definition}{Definition}

\theoremstyle{plain}
\newtheorem{theorem}[definition]{Theorem}
\newtheorem{lemma}[definition]{Lemma}

\newtheorem*{lemma*}{Lemma}

\newtheorem*{conj*}{Conjecture}

\theoremstyle{remark}

\newcommand{\rtg}{\Gamma(n,p)}
\newcommand{\diag}{\mathcal{D}}
\newcommand{\pres}{\langle S\ |\ R \rangle}
\newcommand{\E}{\mathbb{E}}
\newcommand{\eps}{\epsilon}
\newcommand{\davkd}{\widehat{\mathcal{D}}}

\hyphenation{par-ti-cu-lar}

%DOCUMENT BODY

\begin{document}

\title[Collapse of random triangular group]
{A sharp threshold for collapse\\ of the random triangular group}

\author{Sylwia Antoniuk}

\address{Adam Mickiewicz University,
Faculty of Mathematics and Computer Science
ul.~Umultowska 87,
61-614 Pozna\'n, Poland}

\email{\tt antoniuk@amu.edu.pl}

\author{Ehud Friedgut} %\footnote{2nd author supported in part by I.S.F. grant 0398246, and BSF grant 2010247.}

\address{Weizmann Institute of Science,
Faculty of Mathematics and Computer Science
234 Herzl St. 
Rehovot 7610001, Israel}

\email{\tt ehud.friedgut@weizmann.ac.il}

\author{Tomasz \L{u}czak}

\address{Adam Mickiewicz University,
Faculty of Mathematics and Computer Science
ul.~Umultowska 87,
61-614 Pozna\'n, Poland}

\email{\tt tomasz@amu.edu.pl}

\thanks{Sylwia Antoniuk is partially supported by NCN grant 
2013/09/N/ST1/04251, Ehud Friedgut is supported in part by I.S.F. grant 0398246, and BSF grant 2010247, 
and Tomasz {\L}uczak is partially 
supported by NCN grant 2012/06/A/ST1/00261.}

\keywords {random group, collapse, sharp threshold, torsion-free}

\subjclass[2010]{
Primary:
  20P05; %Probabilistic methods in group theory
secondary:
	05C80, % random graphs
	 20F05. %Generators, relations, and presentations
 }

\date{March 13, 2014}

\begin{abstract}
The random triangular group $\rtg$ is the group given by a random 
group presentation with $n$ generators in which every relator 
of length three is present independently with probability $p$. 
We show that in the evolution of $\rtg$ the property 
of collapsing to the trivial group admits a very sharp threshold.
\end{abstract}

\maketitle

\section[Introduction]{Introduction}

Let $P = \pres$ denote a group presentation, where $S$ 
is the set of generators and $R$ is the set of relators. 
A group generated by a presentation $P$ is called a 
\textit{triangular group} if $R$ consists of cyclically reduced words of length three 
over the alphabet $S\cup S^{-1}$, that 
is if $R$ consists of words of the form $abc$ 
such that $a\neq b^{-1}$, $b\neq c^{-1}$ 
and $c\neq a^{-1}$. 
Here we consider the  
random triangular group $\rtg$ defined as a group given by a random triangular 
group presentation with $n$ generators and such that 
each cyclically reduced word of length three over the alphabet 
$S\cup S^{-1}$ is present in $R$ independently with probability $p=p(n)$.

We study the asymptotic properties of the random triangular group 
when the number of generators $n$ goes to infinity. Thus, for 
a group property $\mathcal{P}$ and a function $p(n)$, we say that 
$\Gamma(n,p(n))$ has $\mathcal{P}$ 
\textit{asymptotically almost surely} (a.a.s.), if the probability that 
$\Gamma(n,p(n))$ has this property tends to 1 as $n\rightarrow \infty$.

The notion of the random triangular group was introduced by \.Zuk \cite{Z2003}. 
In particular, he showed that 
%the property of collapsing to the trivial group 
%admits the following  threshold behaviour, i.e. 
for every   constant $\epsilon>0$, if $p \leq n^{-3/2-\epsilon}$, 
then a.a.s. 
$\rtg$ is an infinite, hyperbolic group, while for $p \geq n^{-3/2+\epsilon}$, 
a.a.s. $\rtg$ collapses to the trivial group (his result is stated for 
a somewhat different, yet equivalent, model of random triangular group).
 Antoniuk, \L uczak and \'{S}wi\c{a}tkowski~\cite{ALS2013} improved this result 
 from one side and showed that  there exists a constant 
$C > 0$ such that for $p \geq Cn^{-3/2}$ a.a.s.\ $\rtg$ collapses to the 
trivial group. They also asked if there exists a constant $c>0$  such that 
for $p<c n^{-3/2}$ a.a.s.\ $\rtg$ is infinite. 

 Note that  the property that a group is trivial is monotone, 
 i.e. if $\pres$ is trivial then for 
any $R'\supseteq R$ the group $\langle S\ |\ R' \rangle$ is trivial as well. 
Hence, by a well known argument of
Bollob\'as and Thomason~\cite{BT}, there exists a `coarse' 
threshold function for collapsibility i.e. 
there exists a function $\theta(n)$ such that if $p(n)/\theta(n)\to 0$, 
then a.a.s. $\rtg$ is non-trivial, whereas for 
$p(n)/\theta(n)\to\infty $ a.a.s. $\rtg$ collapses to the trivial group. 
However, the result  of Antoniuk, \L uczak and \'{S}wi\c{a}tkowski~\cite{ALS2013}
and their conjecture we have just mentioned suggest that $\rtg$ collapses  more rapidly, 
i.e. that the collapsibility has a `sharp' threshold.
Our  main result states  that this is indeed the case.

\begin{theorem}\label{th2}
Let $h(n,p)$ denote the probability that $\rtg$ is trivial. 
There exists a function $\tilde c(n)$ such that for any $\epsilon > 0$, 
$$\lim_{n\rightarrow\infty} h(n,(1-\epsilon)\tilde c(n)n^{-3/2}) = 0 \text{ and } 
\lim_{n\rightarrow\infty} h(n,(1+\epsilon)\tilde c(n)n^{-3/2}) = 1.$$
\end{theorem}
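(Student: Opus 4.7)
My approach would be to invoke Friedgut's sharp-threshold theorem for monotone symmetric properties in product spaces. Identify the $N=\Theta(n^3)$ cyclically reduced length-three words with coordinates in $\{0,1\}^N$ and let $f$ be the indicator that the resulting presentation collapses. Then $f$ is monotone, and it is invariant under the natural action of $S_n \rtimes \{\pm 1\}^n$ on the generators, whose orbits on coordinates are all of polynomial size, so Friedgut's framework applies. Combining \.Zuk's result with \cite{ALS2013} shows that the coarse threshold sits at $p = c(n)n^{-3/2}$ for some function $c(n)$ bounded above and bounded away from $0$, which is the regime where the theorem has content.

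Suppose for contradiction that the threshold is coarse. By Friedgut's theorem there exist $\delta,\eta>0$, a value $p^{*} = c(n)n^{-3/2}$ with $\eta < h(n,p^{*}) < 1-\eta$, and a fixed triangular presentation $H = \langle T \mid Q \rangle$ on $|T|=k$ generators and $|Q|=r$ relators, such that adding a uniformly relabelled copy of $Q$ to $\Gamma(n,p^{*})$ raises the probability of collapse by at least $\delta$. The goal is to rule out the existence of such an $H$.

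To refute the booster I would compare two distributions on presentations: $\mu_1$, the law of $\Gamma(n,p^{*})$ augmented by $Q$, and $\mu_2$, the law of $\Gamma(n,q)$ for $q = p^{*}(1+\alpha(n))$ with $\alpha(n)=o(1)$ to be chosen. Since $\mu_2$ is obtained from $\Gamma(n,p^{*})$ by sprinkling in roughly $\alpha(n)n^3 p^{*}$ independent extra relators, it suffices to exhibit, with probability bounded below by some $\delta'>0$, a relabelled copy of $Q$ among this sprinkling acting on $k$ generators that are otherwise \emph{generic}. A second-moment computation, tuning $\alpha(n)$ so that the expected number of such labelled copies is of order $1$, would supply such a copy with positive probability. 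Consequently $h(n,q) \ge h(n,p^{*}) + \delta'\delta - o(1)$, so iterating the boost a bounded number of times over $p$-windows of length $\alpha(n) p^{*} = o(p^{*})$ pushes $h$ from below $\eta$ to above $1-\eta$, contradicting the assumed coarseness of the threshold.

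The main obstacle will be the group-theoretic \emph{separation} step: exhibiting the word pattern $Q$ among the random relators is not enough, one also has to show that the generators it spans were not already killed or identified by other random relators, for otherwise the planted copy would be redundant and would not carry the same collapsing effect. This presumably requires exploiting Theorem~\ref{th1}, which controls the local structure of $\Gamma(n,p)$ at densities just below $n^{-3/2}$, in order to certify that $k$ random generators typically generate a free rank-$k$ subgroup of $\Gamma(n,p^{*})$ on which $Q$ acts as if in a fresh presentation. Carrying this out uniformly over every candidate booster $H$ is where the bulk of the technical work will lie.
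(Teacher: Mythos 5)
Your high-level skeleton matches the paper's: assume a coarse threshold, invoke Friedgut's criterion to obtain a bounded booster $Q$, and derive a contradiction by showing that sprinkling extra random relators is at least as effective as the booster. But the mechanism you propose for refuting the booster --- a second-moment argument exhibiting a relabelled copy of $Q$ among the sprinkled relators --- fails for exactly the boosters one most needs to worry about. The expected number of injectively relabelled copies of a pattern with $k$ generators and $r$ relators in a sprinkling of density $q=\Theta(n^{-3/2})$ is of order $n^{k}q^{r}=n^{k-3r/2+o(1)}$, which tends to $0$ whenever $k<3r/2$ (and one needs the analogous condition for every sub-pattern). Concretely, $Q=\{aaa\}$, or two relators sharing two of their three letters, are legitimate candidate boosters that a.a.s.\ do \emph{not} appear anywhere in the sprinkling, so no admissible choice of $\alpha(n)=o(1)$ makes your comparison of $\mu_1$ and $\mu_2$ go through; you cannot tune the expected number of copies to order $1$ without making $q$ much larger than $(1+o(1))p^{*}$. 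You also flag, but do not resolve, the separation problem of certifying that the planted copy sits on otherwise generic generators.

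The paper sidesteps both difficulties with a domination trick: it replaces $R_{\text{fixed}}$ by the a priori stronger relation $R_{\text{strong}}$ setting every generator occurring in $R_{\text{fixed}}$ equal to $e$. The entire downstream effect of $R_{\text{strong}}$ on the remaining presentation is then a sparse random family of identifications $x=y^{-1}$ arising from relators of $R$ that meet the killed set --- a.a.s.\ fewer than $n^{0.6}$ nontrivial components, each with at most two edges. The sprinkling, meanwhile, a.a.s.\ produces $n^{0.75}$ disjoint identification patterns of the same shape via pairs of relators $abx$ and $aby^{-1}$, so it stochastically dominates the booster's effect without ever reproducing $Q$ itself or analysing genericity. (Also, your closing step of iterating the boost over $o(p^{*})$-windows is not how the contradiction is reached: Lemma~\ref{lemma:threshold} directly asserts that sprinkling at rate $\epsilon p$ raises $h$ by less than $\delta$ while the fixed booster raises it by $2\delta$, and the domination argument contradicts this pair of inequalities in a single step.)
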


Unfortunately, the argument we use does not give any information on the 
asymptotic behaviour of $\tilde c(n)$. Nonetheless we strengthen the  conjecture from~\cite{ALS2013}
and predict that  $\tilde c(n)$ tends to a limit. 

\begin{conj*} There exists a constant $c>0$ such that for every constant $\eps>0$
the following holds.
\begin{enumerate}
\item[(i)] If $p\le (c-\eps)n^{-3/2}$,  then a.a.s.\ $\rtg$ is infinite and hyperbolic.
\item[(ii)] If  $p\ge (c+\eps)n^{-3/2}$, then a.a.s.\ $\rtg$ is trivial.
\end{enumerate}
\end{conj*}
 
 As we have already remarked it was shown in~\cite{ALS2013} that $$\limsup \tilde c(n) <\infty\,.$$
Although we cannot verify the conjecture and prove that $$\liminf \tilde c(n)>0,$$ 
we show however that $\tilde c(n)$ cannot tend to 0 too quickly.  

\begin{theorem}\label{th1}
Let $\omega$ and $p$ be  functions of $n$ 
such that $\omega(n)\to\infty$ as $n\to \infty$, and 
$$p(n) = n^{-3/2 - \omega/\log^{1/3}n}.$$
%Let $\rtg$ be a group given by a random presentation with $n$ generators 
%and relators chosen independently with probability $p=p(n)$  
%among all cyclically reduced words of length three 
%(i.e. the random triangular group).
Then a.a.s. $\rtg$ is infinite, torsion-free, and hyperbolic.
\end{theorem}

The structure of the paper is the following. 
In the next section we prove Theorem~\ref{th2}.
The argument is based on a result of the second author~\cite{Fr99} which, up to our 
knowledge, has never been used to show 
that  properties of  random groups have sharp thresholds. It states,
roughly, that if a property does not admit a sharp threshold then 
it is `local', i.e. its probability can be significantly 
changed by a local modification of the random structure 
(see Lemma~\ref{lemma:threshold} below).
We show that it is not the case with the collapsibility. 
In particular, we show that adding to $R$ a few more 
specially selected relators affects the probability of collapsing 
less than a tiny increase of 
the probability $p$, which in turn corresponds to 
adding to $R$ a small number of random relators.
Hence, a local modification of the random structure 
cannot have large impact on the probability of the property in question.

Then we prove Theorem \ref{th1}. We follow closely 
the argument of Ollivier who 
in \cite{O2004} showed that the assertion holds for some function 
$p(n)=n^{-3/2+o(1)}$.  This result was initially stated by Gromov~\cite{G1993} 
however it seems that Ollivier was the first one who 
gave a complete proof of this statement. 
We basically rewrite 
Ollivier's argument (who, following \.Zuk,  used a slightly different model 
of the random triangular group) to replace $o(1)$ in the power by some explicit 
function.

\section[Theorem~\ref{th2}]{Proof of Theorem~\ref{th2}}

As mentioned in the introduction, the tool we use in order to 
prove the sharpness of the threshold, 
as expressed in Theorem~\ref{th2}, is a result from Friedgut~\cite{Fr99}. 
In \cite{Fr99} the  author gives a general necessary condition for a property to 
have a coarse threshold, namely that it can be well approximated by a local property. 
Although the main theorem in that paper refers to graphs, the proof extends to 
hypergraph-like settings where the number of isomorphism types of bounded size 
is bounded. This includes random hypergraphs, random SAT Boolean formulae, 
and also the model of random groups that we are addressing in the current paper. 
A different, but very similar tool that can be used here is Bourgain's theorem 
that appears in the appendix of \cite{Fr99}, which has a weaker conclusion, 
but does not assume the symmetry of the property in question, such as we have 
in our current problem. To make things simpler we will use the ''working-mathematicians-version" 
of these theorems, as described in 
Friedgut~\cite {Fr05}.
We present below the lemma we will use, stated in terms of the problem at hand, 
but first let us introduce some notation. For each value of $n$ we denote by 
$S$ the set $S_n$ of generators, and assume that $S_n \subset S_{n+1}$, so that 
any fixed relator is meaningful for all sufficiently large values of $n$.
Next, let $\Gamma(n,p)$ be given by a presentation $P = \pres$, where $R$ is random, 
and let $R^*$ be a set of relators. We use the notation 
\[ h(n,p| R^*) := \text{Pr}[ \langle S\ |\ R \cup R^* \rangle \text{ is trivial}]. \] 
We  will use this notation both for $R^*=R_{\text{fixed}}  =\{r_1,\ldots, r_k\}$, 
a fixed set of cyclically reduced relators of length three, and  for $R^*= R_{\epsilon p}$, 
a random set of relators chosen from $S_n$ with probability $\epsilon p$
(in which case the probability is over both the choice of $R$ and of $R^*$).
The following is an adaptation of theorems 2.2, 2.3, and 2.4 from \cite{Fr05} to the current setting.

\begin{lemma}\label{lemma:threshold}
Assume that 
there exists a function $p=p(n)$, 
and constants $0< \alpha , \epsilon< 1$, such that there exist infinitely many values of $n$ 
for which it holds that 
\[
 \alpha < h(n,p)  < h(n,(1+\epsilon)p) < 1-\alpha.
\]
Then there exists a fixed (finite, independent of $n$ but possibly dependent on $\alpha$ 
and $\epsilon$) set $R_{\text{fixed}}  
=\{r_1,\ldots, r_k\}$ of cyclically reduced relators of length three, and a constant 
$\delta>0$ such that for all such $n$
\begin{enumerate}
\item[(i)]
$  h(n,p | R_{\text{fixed}}) > h(n,p) + 2 \delta$ 
\item[(ii)]
$h(n,p| R_{\epsilon p} ) < h(n,p) + \delta$
\end{enumerate}

\end{lemma}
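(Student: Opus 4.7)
The plan is to apply the coarse-threshold dichotomy of Friedgut from \cite{Fr99} and \cite{Fr05} essentially as a black box, after verifying that our setting fits its hypotheses. The probability space underlying $\rtg$ is a product space: the ground set is the (polynomially large) collection of cyclically reduced words of length three over $S \cup S^{-1}$, and each such word is included in $R$ independently with probability $p$. The property of collapsing to the trivial group is monotone increasing and invariant under the natural action of the symmetric group $S_n$ on the generators, so the qualitative framework of \cite{Fr05} applies.

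The first technical step is to verify Friedgut's structural hypothesis, namely that for each fixed $k$ the number of $S_n$-isomorphism classes of $k$-tuples of cyclically reduced length-three relators is bounded independently of $n$. This holds because a $k$-tuple mentions at most $3k$ generator-slots, and its orbit under $S_n$ is determined by the pattern of equalities and inversions among these slots. This ``bounded isomorphism types of bounded size'' condition is precisely what makes the graph-theoretic arguments of Theorems~2.2--2.4 of \cite{Fr05} carry over to our setting.

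With the hypotheses in place, one feeds the negation of Theorem~\ref{th2} into Friedgut's theorem. From the assumption $\alpha < h(n,p) < h(n,(1+\epsilon)p) < 1-\alpha$ for infinitely many $n$, the theorem produces a fixed finite configuration of length-three relators $R_{\text{fixed}} = \{r_1, \ldots, r_k\}$ and a constant $\delta > 0$ such that conditioning on $R_{\text{fixed}} \subseteq R$ raises the probability of triviality by at least $2\delta$; this is statement (i). Statement (ii) then comes from the standard comparison between a constant-factor density boost and a random augmentation: since $R$ together with an independent $R_{\epsilon p}$ is distributed, up to negligible collisions, like $R_{(1+\epsilon)p}$, the assumed smallness of $h(n,(1+\epsilon)p) - h(n,p)$ forces $h(n,p \mid R_{\epsilon p}) < h(n,p) + \delta$.

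I expect the main obstacle to lie not in the probabilistic content but in the bookkeeping required to port Friedgut's graph-language statements to the hypergraph/group setting. The proofs in \cite{Fr99, Fr05} use the graph structure only through the product space, the symmetric group action, and the isomorphism-type bound above; once this is acknowledged the adaptation is mechanical and the lemma follows as a direct corollary.
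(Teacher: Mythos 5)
Your proposal matches the paper's treatment: the paper states this lemma without proof, presenting it as a direct adaptation of Theorems 2.2--2.4 of \cite{Fr05}, and justifies the adaptation exactly as you do, by noting that Friedgut's arguments use only the product structure of the probability space, the monotonicity and symmetry of the property, and the boundedness of the number of isomorphism types of bounded-size configurations of relators. One small caveat: your stated derivation of (ii) from ``the assumed smallness of $h(n,(1+\epsilon)p)-h(n,p)$'' is loose, since the hypothesis only bounds that gap by $1-2\alpha$ (which need not be smaller than the $\delta$ produced for (i)); in Friedgut's theorem (i) and (ii) are produced together, with (ii) coming from the bounded-derivative consequence of the coarse-threshold hypothesis via the Margulis--Russo lemma --- but as you are invoking the theorem as a black box, this does not affect the correctness of the proposal.
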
 

We will now see how this lemma, together with the fact that $\rtg$ collapses when 
$p=n^{-3/2+o(1)}$ (see either  Olliver~\cite{O2004}, or Antoniuk, \L uczak and \'{S}wi\c{a}tkowski~\cite{ALS2013} and Theorem~\ref{th1})
implies Theorem~\ref{th2}.

\begin{proof}[Proof of Theorem~\ref{th2}]
Assume, by way of contradiction, that Theorem~\ref{th2} does not hold. 
Then the assumptions of Lemma~\ref{lemma:threshold} are met.
Indeed, let $p_c = p_c(n)$ be defined so that $h(n,p_c) = 1/2$. 
Then specifically Theorem~\ref{th2} does not hold with the choice of $\tilde{c}(n) n^{-3/2} = p_c$. 
Hence there exists an $\epsilon_0 > 0$ and a positive constant $\alpha < 1/2$ 
such that for infinitely many values of $n$ 
either one has $h(n,(1 + \epsilon_0) p_c) < 1 - \alpha$, or  $h(n, (1 - \epsilon_0)p_ c) > \alpha$. 
In the first case $p = p_c$, $\alpha$, $\epsilon = \epsilon_0$ meet the assumptions of Lemma~\ref{lemma:threshold}, 
in the latter one can take $p = (1 - \epsilon_0) p_c$, $\alpha$, and $\epsilon = \epsilon_0 / (1 - \epsilon_0)$.

Now, let $R_{\text{fixed}} $ be the set of relators guaranteed by Lemma~\ref{lemma:threshold}, 
and let $Z := \{ z_1, z_2, \dots, z_\ell \}$ be the set of all generators involved in $R_{\text{fixed}}$ 
and all of their inverses.
Let $R_{\text{strong}}$ be the following relation: $z_1=z_2=\ldots = z_\ell=e$, where 
$e$ is the identity. Clearly
\[
 h(n,p | R_{\text{strong}}) \ge h(n,p | R_{\text{fixed}}).
\]
Next, consider the graph $G=(V,E)$, where 
$V = (S\cup S^{-1}) \setminus Z$, and $E$ consists of all pairs $xy$ such that there is 
a relator in $R$ which involves $x,y$ and an element of $Z$ (implying $x=y^{-1}$, 
since all elements in $Z$ are set by $ R_{\text{strong}}$ to be equal to the identity). 
The probability that a given pair $xy$ forms an edge is less than $6 \ell p$, 
and these events are independent, so $G$ can be coupled with the Erd\H{o}s-R\'{e}nyi
random graph $G(2n-\ell, q)$ with $q =O(n^{-3/2+o(1)})$.
Elementary first moment estimates imply that a.a.s.  $G$ has 
fewer than $n^{0.6}$ non-trivial  components each
of them consisting of at most two edges. Indeed, the expected number of connected subgraphs 
with exactly 3 edges is bounded by
\[(2n)^3 q^3 + 16(2n)^4 q^3 = O(n^{-0.5}),\]
hence by Markov's inequality a.a.s. there are no such subgraphs and isolated edges 
and paths of length two are the only non-trivial 
components. Moreover, the expected number of edges in $G$ can be bounded by
\[(2n)^2 q = O(n^{0.5}),\]
and again by Markov's inequality a.a.s. there are at most $n^{0.6}$ of them. 
Let $R'$ be the set of relators in $R$ that are disjoint from $Z$. 
%(Note that since $Z$ is finite a.a.s. there are no relators in $R$ containing two elements from $Z$.) 
Slightly abusing the notation we will also use $E$ to denote the set of relators 
$\{ xy : \{x,y\} \in E\}$. We have
\begin{equation} \label{eq:local}
 h(n,p | R_{\text{strong}}) = \text{Pr} [ \langle S \setminus Z\ |\ R' \cup E \rangle \text{ is trivial}] + o(n^{-0.4}) ,
\end{equation}
where the $o(n^{-0.4})$ accounts for the case where there exists in $R$ a relator involving two elements of $Z$.
Note that both $E$ and $R'$ are random.

Now let us consider the effect of $R_{\epsilon p}$. First 
let us choose arbitrarily a set $M$, $|M|=m=\lfloor n^{1.9}\rfloor$,  
of pairs of generators $\{a,b\}$, $a,b\in S\cup S^{-1}$.  
Define a graph $G' = (V',E')$  with $V' =  (S\cup S^{-1}) \setminus Z$ and $E'$ 
consisting of all pairs $xy$ such that $R_{\epsilon p}$ includes two relators 
of the form $abx$ and $aby^{-1}$, where $\{a,b\}\in M$.
Note that the existence of such two relators clearly implies that
$x=y^{-1}$. 
Let $X$ denote the  number of paths of length two in $G'=(V',E')$. 
It is easy to see that for the expectation of $X$ we have 
$$\E X\ge 
0.5 (2n)^3 m^2 (\eps p)^4=4 n^3 n^{3.8} n^{4(-3/2+o(1))}=4 
n^{0.8-o(1)}\ge 4n^{0.75}.$$
It is also easy to check that the standard deviation of $X$ is also 
of order $O(n^3m^2p^4)$, so from Chebyshev's inequality we infer 
that a.a.s. the number of such paths is larger than $3n^{0.75}$. 
On the other hand, let $Y$ be the number of 
pairs of paths which share at least one vertex.
The expectation 
of $Y$ is dominated by the number of pairs which share one edge
and is bounded from above by 
\[\E Y\le (2n)^4 m^3 (\eps p)^6 = 16n^4 n^{5.7} n^{6(-3/2+o(1))}=16n^{0.7+o(1)}.\]
Thus, from Markov's inequality, a.a.s. the number of such pairs is of order smaller 
than $n^{0.75}$. Consequently, a.a.s. $G' = (V',E')$  contains at least
$n^{0.75}\gg n^{0.6}$ disjoint paths of length two. 

Now  
\begin{equation} \label{eq:global}
 h(n,p | R_{\epsilon p}) = \text{Pr} [ \langle S \setminus Z \ |\ R' \cup E'  \rangle \text{ is trivial}] -o(n^{-0.4})\, ,
\end{equation}
where the term $o(n^{-0.4})$ accounts for the fact that even if the group generated by 
$  S \setminus Z $ collapses there are the generators in $Z$ to account for. 
However, if all generators in $  S \setminus Z$ are set to be equal to the identity 
it suffices that for each element $z \in Z$ there will be in $R$ a relator involving 
$z$ and two elements of $S \setminus Z$.
The probability of this event is at least as large as 
 $1-o(n^{-0.4})$ as $n$ tends to infinity.

We have shown  that~$G'$ contains at least $n^{0.1}$ edge-disjoint subgraphs isomorphic to  the graph spanned by the edges of $G$, i.e. there is a coupling which shows that $E'\supseteq E$.
Thus, the equations (\ref{eq:local}) and (\ref{eq:global}) contradict the items 1 and 2 in the conclusion of Lemma \ref{lemma:threshold}. 
Consequently,  the hypothesis of the lemma cannot hold, and the property in question must have a sharp threshold.
\end{proof}

\section[Theorem~\ref{th1}]{Proof of Theorem~\ref{th1}}

%We begin with the proof of the hyperbolicity of the random triangular group $\rtg$ 
%where $p=n^{-3/2-f(n)}$ and $f(n)$ is a function tending to 0 with $n\rightarrow\infty$ 
%to be determined later. 
In order to  show Theorem~\ref{th1} we need to introduce a number of 
somewhat technical definitions. Let $P = \pres$ be a group 
presentation. A~\textit{van Kampen diagram} with respect to the presentation $P$ 
is a finite planar 2-cell complex $\diag$ given with an embedding $\diag \subseteq \mathbb{R}^2$ 
and satisfying the following conditions.
\begin{itemize}
 \item $\diag$ is connected and simply connected,
 \item For each edge $e$ and one of its orientations we assign a generator $s\in S$. If we change 
 the orientation of $e$ to the opposite one, we replace the generator $s$ by $s^{-1}$.
 \item Each 2-cell $c$ is assigned a relator $r\in R$, the number of edges on the boundary of 
 $c$ is equal to the length of the relator $r$,
 %\item Each 2-cell $c$ has a marked vertex $v$ on its boundary and an edge 
 %containing $v$ orientation at this vertex,
 \item For each 2-cell $c$ there is a vertex $v$ such that the word read from $v$ 
in some direction of the boundary of the cell  is the 
 relator $r\in R$ assigned to $c$.
\end{itemize}

For a van Kampen diagram $\diag$ the \textit{size} of the diagram, denoted by 
$|\diag|$, is the number of faces (2-cells) of $\diag$. 
The \textit{boundary} of $\diag$, denoted by $\partial \diag$, is the boundary 
of the complement of $\diag$ in $\mathbb{R}^2$ and  
$|\partial \diag|$ denotes its size, that is the number of edges 
in $\partial \diag$. The \textit{boundary word} is any word read from 
some vertex in $\partial \diag$ in one of the directions around the boundary. In particular, 
the length of this word is precisely $|\partial \diag|$.

A~van Kampen diagram is said to be \textit{reduced} if there is no pair of 
cells $c$ and $c'$  sharing at least one edge $e$, which are assigned the same relator~$r$,
and are  such that if we read the word $r$ on the boundaries of $c$ and $c'$ 
the edge $e$ has the same orientation and corresponds to the same letter in the relator with 
respect to the starting point. A~van Kampen diagram is said to be \textit{minimal} 
if it is reduced and no other reduced van Kampen diagram with 
smaller number of faces has  the same boundary word.

Let $\Gamma$ be the group given by a presentation $P = \pres$. 
In order to verify whether $\Gamma$ is hyperbolic it is enough to 
consider minimal reduced van Kampen diagrams with respect to the presentation $P$ 
and to show that they fulfill a certain 
geometric condition. In particular, it is known that 
a group generated by 
a presentation $P = \pres$ is hyperbolic if and only if 
there exists a coefficient $\delta > 0$ such that
every minimal reduced 
van Kampen diagram $\diag$ with respect to the presentation $P$ 
satisfies the linear isoperimetric inequality 
$|\diag| \leq \delta|\partial\diag|$ (cf. \cite{A+1991}).

However, verifying that every reduced van Kampen diagram satisfies a certain 
isoperimetric inequality may turn out fairly hard since it requires 
showing that this inequality holds for all of them.
% and there are infinitely many of them.
At this point, the so called local to global 
principle for hyperbolic geometry 
(or Cartan-Hadamard-Gromov-Papasoglu theorem) (cf.\cite{P1996}) comes to 
an aid. This principle states that it is enough to verify  the 
isoperimetric inequality  for a finite family 
of van Kampen diagrams.  

\begin{theorem}[Cartan-Hadamard-Gromov-Papasoglu]\label{local-to-global}
Let $P = \pres$ be a triangular group presentation. Assume that for 
some integer $K > 0$ every minimal reduced van Kampen diagram $\diag$ w.r.t.~$P$ 
and of size $K^2/2 \leq |\diag| \leq 240K^2$ satisfies the inequality 
\[ |\diag| \leq \frac{K}{200}|\partial \diag|. \] 
Then for every minimal reduced van Kampen diagram w.r.t.~$P$ the following 
isoperimetric inequality is true 
\[|\diag| \leq K^2 |\partial\diag| .\]
\end{theorem}

Following Ollivier \cite{O2004}, in order to simplify the verification 
of the isoperimetric condition for van Kampen diagrams, we introduce
a $k$-labeled \textit{decorated abstract van Kampen diagram} (davKd). 
For simplicity, we do it only for groups with triangular presentations.
A $k$-labeled davKd is a finite planar 2-cell complex $\davkd$ 
given with an embedding $\davkd \subseteq \mathbb{R}^2$ 
and satisfying the following conditions:
\begin{itemize}
 \item $\davkd$ is connected and simply connected,
 \item each 2-cell $c$ is a triangle with a label $i$ from $\{1,\ldots,k\}$,
       with a marked vertex on its boundary and an orientation at this vertex,
 \item for all $i\in\{1,\ldots,k\}$ and for any 2-cell $c$ labeled by $i$, 
       starting from the marked vertex and going around according 
       to prescribed orientation at this vertex, the edges of $c$ get abstract labels $i_1, i_2, i_3$.
\end{itemize}

Now, let $P = \pres$ be a triangular presentation and 
consider a one-to-one map $\phi: \{1,\ldots,k\} \to R$ which assigns 
relators to faces of $\davkd$. Let $\phi_1(i), \phi_2(i), \phi_3(i)$ be the generators 
appearing on the first, second and third position of the relator $\phi(i)$ respectively. 
Then $\phi$ induces a map $\widetilde{\phi}:\{i_r\}_{1\leq i \leq k, 1 \leq r \leq 3} \to S$ 
which assigns to each abstract label $i_r$ a generator from $S$, namely 
$\widetilde{\phi}(i_r) = \phi_r(i)$. The map $\phi$ is called 
a \textit{fulfillment map}, if additionally whenever there is an edge with 
two abstract labels $i_r$, $j_s$, then $\widetilde{\phi}(i_r)=\widetilde{\phi}(j_s)$.
We say that a given davKd $\davkd$ is \textit{fulfillable} 
with respect to the presentation $P = \pres$ if there exists a fulfillment 
map $\phi: \{1,\ldots,k\} \to R$ for $\davkd$.

A~davKd is said to be \textit{reduced} if 
there is no pair of faces sharing at least one edge, which are assigned the same label $i$ 
and have opposite orientations, 
and such that the common edge gets the same abstract label $i_r$ from both faces. 
A~davKd is said to be \textit{minimal} if there is no other davKd with 
smaller number of faces and having the same boundary word (with respect to the abstract 
labels of edges).

Our aim is to show that for a function $f=f(n)= \omega/\log^{1/3}n$, where $\omega=\omega (n)\to\infty$  
and $p=n^{-3/2-f}$, a.a.s. all minimal reduced van Kampen diagrams $\diag$  
with respect to the random presentation in $\rtg$ satisfy the 
isoperimetric inequality with a coefficient $\delta = \delta(n)=
(200/f)^2$. But to do it, it is enough to verify this inequality 
for all minimal reduced $k$-labeled davKd's, so we show that the following 
statement holds.

\begin{lemma}\label{lem1}
Let $\omega=\omega(n)\to\infty$, $\omega < \log\log n$, 
$f=f(n)=\omega/\log^{1/3}n$, and $p=p(n)=n^{-3/2-f} $.
Then  a.a.s. for each minimal reduced $k$-labeled
davKd $\davkd$, fulfillable w.r.t. $\rtg$, we have 
\[|\davkd| 
\leq \big(200 /f\big)^2 |\partial\davkd| .\]
In particular, a.a.s. each minimal reduced van Kampen diagram $\diag$ w.r.t. 
$\rtg$ satisfies the linear isoperimetric inequality
\[|\diag| \leq \big(200 /f\big)^2  |\partial\diag|.\]
\end{lemma}

\begin{proof} Let  $f=f(n)= \omega/\log^{1/3}n$. 
From Theorem~\ref{local-to-global} it is enough to show that a.a.s.
each given davKd $\davkd$ of size at most $|\davkd|\le 240 (200/f)^2$
satisfies the linear isoperimetric inequality with the coefficient $1/f$. 
We do it in two steps. First, we show that for any
davKd $\davkd$ with size bounded by $|\davkd|\le 240 (200/f)^2$
one of the following two possibilities holds:
\begin{enumerate}
\item[(i)] $\davkd$ satisfies 
the isoperimetric inequality with the coefficient $1/f$; 
\item[(ii)] the probability 
that  $\davkd$ is fulfillable by $\rtg$  
is bounded from above by $n^{-f/2}$. 
\end{enumerate}
Using this dichotomy, we then show that the probability that 
there is a bounded size fulfillable davKd $\davkd$ not satisfying the isoperimetric inequality 
in question goes to 0 with $n\to\infty$. 
Hence, a.a.s. all sufficiently small fulfillable davKd's satisfy this inequality.

Let $\davkd$ be a davKd with $m = |\davkd|$ faces having $k$ distinct labels 
and with $l_1$ internal edges and $l_2=|\partial\davkd|$ boundary edges. 
If each face is assigned a different label, i.e. each cell of $\davkd$ corresponds to a different 
relator, the probability that $\davkd$ is fulfillable is bounded above by $n^{l_1+l_2}p^m$. 
This is in fact a rather easy case and showing that for all diagrams with 
different labels and fulfillable in $\rtg$ a.a.s. an isoperimetric inequality holds with a coefficient $1/f$ is 
rather straightforward. Indeed, assume that for a given $\davkd$ the isoperimetric 
inequality does not hold, that is $f m = f |\davkd| > |\partial\davkd| = l_2$. Notice also 
that $3m = 2l_1 + l_2 \geq 2l_1 + 1$ as there is at least one edge in the boundary 
of $\davkd$. Then the probability that $\davkd$ is fulfillable 
is bounded by $n^{l_1+l_2}p^m = n^{l_1+l_2}n^{(-3/2-f)m} \leq n^{-1/2}$. 
Moreover, as we will show later, the number of different davKd's $\davkd$ with $|\davkd|\le 240 (200/f)^2$
is of order much smaller than $n^{1/2}$, hence a.a.s. there are no such diagrams which are 
fulfillable and at the same time do not satisfy the isoperimetric inequality.

The main challenge is to deal with diagrams 
where some  of the labels may appear more than once. 
On one hand, this reduces the number 
of distinct relators used to fulfill the diagram. On the other hand, this 
also imposes some restrictions on the generators used in this assignment.
To control the influence of these two factors we follow an approach of Ollivier 
from~\cite{O2004}. To this end let 
 $m_i$ denote the number of faces labeled with $i$. 
Without loss of generality we may assume that $m_1 \geq m_2 \geq \ldots \geq m_k$.
We want to count the probability that $\davkd$ is fulfillable with respect to the  
random presentation given by $\rtg$. 
We introduce an auxiliary graph $G = G(\davkd)$ which  
captures all the constraints resulting from the structure of the davKd.
The vertices of the graph $G$ are the abstract labels $\{i_r\}_{1\leq i \leq k, 1\leq r \leq3 }$ 
and two vertices $i_r$, $j_s$ are adjacent if there is an edge in $\davkd$ 
carrying labels $i_r$ and $j_s$. We also define a family of induced subgraphs 
$G_1 \subset G_2 \subset \ldots \subset G_k$ of $G$, where $G_l$ is a subgraph of $G$ 
induced by vertices $\{i_r\}_{1\leq i \leq l, 1\leq r \leq3 }$. Let us remark that
the main reason why this approach to davKd is so convenient is the fact that 
$G$ contains no loops if and only if davKd is reduced.

Now, the number of connected components in the graph $G$ is the total number of distinct 
generators which can appear in relators used in the 
fulfillment map $\phi$ for $\davkd$. In some sense, this gives us  
the number of degrees of freedom we have while choosing relators for the fulfillment map. 
Indeed, if two vertices are adjacent in~$G$, 
then the corresponding abstract labels in $\davkd$ are mapped by 
$\widetilde{\phi}$ to the same generator. 
Therefore, if we denote the number of connected components 
in the graph $G$ by $C$, then we obtain the estimate
\[\text{Pr}(\davkd \text{ is fulfillable} )\leq n^{C}p^k.\]
A similar argument works for the graphs $G_l$,
which correspond to a partial assignment, namely we assign relators 
to faces bearing labels $1,\ldots, l$. 
Let $C_l$ denote the number of 
connected components in $G_l$. Then
\[\text{Pr}(\davkd \text{ is fulfillable} )\leq n^{C_l}p^l = n^{C_l - l(3/2 + f)},\]
therefore putting 
\[ d_l = C_l - l(3/2 + f) \]
we get the estimate
\[\text{Pr}(\davkd \text{ is fulfillable} )\leq n^{\min d_l}.\]
Thus, if for some $l$ we have $d_l < -f/2$, then 
\[\text{Pr}(\davkd \text{ is fulfillable} )\leq n^{-f/2}.\]
On the other hand, 
we claim that in the case of $\min d_l \geq -f/2$, the diagram $\davkd$ 
satisfies the isoperimetric inequality with the coefficient $1/f$.
Indeed, as was observed by  Ollivier \cite{O2004} (see p.613)  one gets that 
\[ |\partial \davkd| \geq 3|\davkd|(1-2d) + 2\sum_{l=1}^k d_l (m_l - m_{l+1}),  \]
where the parameter $d$ is the density of the random triangular 
group, which in our notation is equal to $1/2 - f/3$. Thus
\[ |\partial \davkd| \geq 2f|\davkd|+ 2\sum_{l=1}^k d_l (m_l - m_{l+1}). \]
Next, observe that $m_l - m_{l+1} \geq 0$ for every $l$ and $\sum m_l = |\davkd|$. Hence, 
if $\min d_l \geq -f/2$, then 
\[ |\partial \davkd| \geq 2f|\davkd| - f\sum_{l=1}^k (m_l - m_{l+1}) \geq f|\davkd|, \]
and we arrive at the desired isoperimetric inequality
\begin{equation}\label{eq:2}
|\davkd| \leq \frac 1f|\partial\davkd|.
\end{equation}

To complete our argument we use the local to global principle. 
In our case the coefficient $K$ from Theorem \ref{local-to-global} is equal to $200/ f$. 
We need to show that the probability that there exists a diagram of size 
at most $240 (200/f)^2$ violating the isoperimetric inequality (\ref{eq:2}) 
tends to 0.
If this is the case, then the random presentation in the 
$\rtg$ model a.a.s. meets the assumptions of the local to 
global principle, hence a.a.s. each diagram satisfies the isoperimetric inequality 
with the coefficient $(200/f)^2$.

First, we need to count the number of all possible davKd's with precisely $m$ faces. 
To do this we take the number of all possible triangulations 
of a polygon which consist of exactly $m$ triangles, and then for each triangle 
we choose the orientation in 2 ways, the starting point in 3 ways
and the label of this face in $m$ ways. 

A triangulation of a polygon with $m$ triangles has at most 
$m+2$ vertices. Thus, the number of such triangulations  
is bounded from above by the number of distinct triangulations $t(N)$ of a 
2-dimensional sphere with $N$ vertices, where $N \leq m+3$, which 
in turn we bound from above by $\alpha^m$ for some absolute 
constant $\alpha>0$ (see Tutte~\cite{T1962}). 
Hence,  the total number of davKd's with exactly $m$ faces can be bounded by
$\alpha^m \cdot 6^m \cdot m^m/m! \leq \beta^m$, where $\beta>0$ 
is an appropriate constant. 
Therefore, the probability that a fulfillable davKd of size at most 
$240 (200/f)^2$  
violates the isoperimetric inequality (\ref{eq:2}) is at most
\begin{equation*}\label{eq5}
\sum_{m\leq 240 (200/f)^2} \beta^m n^{-f/2} 
\leq \beta^{\frac{\gamma}{f^2}} n^{-f/2},
\end{equation*}
for some constant $\gamma > 0$. It is  easy to verify that the right hand side
of this inequality tends to 0 as $n\to\infty$ provided 
$f=f(n) =\omega / \log ^{1/3}n $. 
Hence, a.a.s. for every davKd $\davkd$ with
$|\davkd|\le 240 (200/f)^2$ fulfillable in $\rtg$ 
the isoperimetric inequality holds with a coefficient $1/f$ and 
so the assertion follows from Theorem~\ref{local-to-global}.
\end{proof}

\begin{proof}[Proof of Theorem~\ref{th1}]
As the group properties in question are monotone decreasing, it is enough to consider 
$p(n) = n^{-3/2 - \omega/\log^{1/3}n}$ with $\omega < \log\log n$.  
Observe first that a.a.s. $\rtg$  is aspherical, i.e. 
there exists no reduced 
spherical van Kampen diagram with respect 
to the random presentation $\rtg$.
Indeed, such  a spherical reduced van Kampen 
diagram  has zero boundary, 
which violates the isoperimetric inequality proved in Lemma~\ref{lem1}.
Since $\rtg$ is aspherical,  
it is torsion-free (see, for instance, Brown~\cite{Br}, p. 187).
Consequently,  a.a.s. $\rtg$ is an infinite, hyperbolic group.
\end{proof}

Let us conclude with a  remark that in order to show the conjecture we have to prove Theorem~\ref{th1} with  $\tau(n)=O(1/\log n)$ instead of 
$\tau(n)=\omega/(\log n)^{1/3}$. Such an improvement seems to require a new approach 
and, perhaps, a stronger version of Theorem~\ref{local-to-global}.

\bibliographystyle{plain}

\begin{thebibliography}{}

\bibitem{A+1991} J.~Alonso, T.~Brady, D.~Cooper, V.~Ferlini, M.~Lustig, M.~Mihalik, M.~Shapiro, and  
H.~Short et al., {\em Notes on word hyperbolic groups}, Group Theory from a Geometrical Viewpoint, 
(E. Ghys, A. Haefliger, and A. Verjovsky, eds.) World Scientific, Singapore, 1991, 3--63.

\bibitem{ALS2013} S.~Antoniuk, T.~\L uczak, and J.~\'{S}wi\c{a}tkowski, {\em Collapse of random triangular groups: a closer look\/},
 Bull. Lond. Math. Soc. 46,4 (2014), 761--764. 

\bibitem{BT}
B.~Bollob\'as and A.~Thomason, 
{\em Threshold functions\/}, 
Combinatorica 7 (1987), %no. 1, 
35--38. 

\bibitem{Br} K.~Brown, "\textit{Cohomology of groups}", Graduate Texts in Mathematics 87, Springer, 1982.

\bibitem{G1993} M.~Gromov, {\em Asymptotic invariants of infinite groups. Geometric Group Theory\/},
London Math. Soc. Lecture Note Ser. 182 (1993), 1--295.

\bibitem{Fr99} E. Friedgut,
{\em Sharp thresholds of graph properties, and the $k$-sat problem}.
With an appendix by Jean Bourgain.
Journal of the American Math. Soc. 12,4 (1999), 
1017--1054.

\bibitem{Fr05} E. Friedgut, {\em Hunting for sharp thresholds}, Random Struct. Algorithms 26 (2005), %no.~1-2, 
37--51.

\bibitem{O2004} Y.~Ollivier, {\em Sharp phase transition theorems for hyperbolicity of random groups}, 
Geom. Funct. Anal. 14 (2004), 595--679.

\bibitem{P1996} P.~Papasoglu, {\em An algorithm detecting hyperbolicity}, 
(Minneapolis, MN and New Brunswick, NJ, 1994) DIMACS Ser. Discrete Math. Theoret. Comput. Sci.,
vol. 25, Amer. Math. Soc., Providence, RI (1996), 193--200.

\bibitem{T1962} W.~T.~Tutte, {\em A census of planar triangulations}, Canad. J. Math. 14 (1962), 21--38.

\bibitem{Z2003} A.~\.{Z}uk, {\em Property (T) and Kazhdan constants for discrete groups\/},
Geom. Funct. Anal. 13 (2003),  643--670.

\end{thebibliography}

\end{document}